\documentclass[11pt]{amsart}
\usepackage{wrapfig}
\usepackage[english,francais]{babel}
\usepackage{graphicx}

\usepackage{amssymb}
\usepackage{amsfonts}
\usepackage{amsmath}
\usepackage{amsthm}

\setcounter{MaxMatrixCols}{10}

\usepackage[latin1]{inputenc}
\usepackage{amsmath,amssymb,amsthm,epsfig}
\usepackage{eufrak}
\usepackage{color}
\usepackage{mathrsfs}
\usepackage{latexsym}
\usepackage{times, amscd}

\newtheorem{teo}{Theorem}

\newtheorem{cor}[teo]{Corollary}

\newcommand{\scr}{\mathscr}

\newcommand{\C}{{\mathbb{C}}}

\newcommand{\wP}{{\mathbb{P}}}
\newcommand{\umapright}[1]
{\hskip3pt\smash{\mathop{\longrightarrow}\limits^{#1}} \hskip3pt}

\addtolength{\hoffset}{-.5cm} \addtolength{\textwidth}{1cm}
\addtolength{\voffset}{-.5cm} \addtolength{\textheight}{1cm}

\begin{document}
\selectlanguage{english}
\title{Residue formula for Morita-Futaki-Bott invariant on orbifolds}
\author{ Maur\'icio Corr\^ea, Miguel Rodr\'iguez}
\address{ Maur\'icio   Corr\^ea  , Dep. Matem\'atica
ICEx - UFMG, Campus Pampulha, 31270-901 Belo Horizonte - Brasil.}
\email{mauricio@mat.ufmg.br}

\address{Miguel Rodr\'iguez ,
Dep. Matem\'atica  UFSJ,Praa Frei Orlando, 170 - Centro,  36307-352 S\~ao Jo\~ao Del Rei - MG, - Brasil} \email{miguel.rodriguez.mat@gmail.com}

\begin{abstract}
\selectlanguage{english}
In this work we prove a residue formula for the  Morita-Futaki-Bott
invariant with respect to  any holomorphic vector field, with
isolated (possibly degenerated) singularities in terms of
Grothendieck's residues.
\smallskip

\selectlanguage{francais}
\noindent{\bf R\'esum\'e.} \vskip 0.5\baselineskip \noindent
{\bf Une formule r\'esiduelle pour l'invariant de Morita-Futaki-Bott sur une orbifold } \newline
On obtient, en utilisant les r\'esidus de Grothendieck, une formule r\'esiduelle pour l'invariant de Morita-Futaki-Bott par rapport \`a un champ de vecteurs holomorphe avec singularit\'es isol\'ees, d\'eg\'en\'er\'ees ou non.

\end{abstract}
\selectlanguage{english}

\maketitle


\footnotetext[1]{ {\sl 2000 Mathematics Subject Classification.}
57R18, 55N32 } \footnotetext[2]{{\sl Key words:} orbifolds, Chern
classes, Residues  of vector fields. } \footnotetext[3]{{\sl This
work was partially supported by CNPq, CAPES, FAPEMIG and FAPESP-2015/20841-5.} }

\section*{Introduction}

Let $X$ be a compact complex orbifold of dimension $n$. That is, 
$X$ is a complex space endowed with the following property:  each point $p \in X$ possesses a neighborhood which is the quotient $\widetilde{U}/G_p$, where $\widetilde{U}$ is a complex manifold, say of dimension $n$, and $G_p$ is a properly discontinuous finite group of automorphisms of $\widetilde{U}$, so that locally we have a quotient map  $(\widetilde{U}, \tilde{p}) \umapright{\pi_p} (\widetilde{U}/G_p,p)$.
 See \cite{alr}.

Let  $\eta(X)$  be  the complex Lie algebra of
all holomorphic vector fields of $X$. Choose any hermitian metric
$h$ on $X$ and let $\nabla$ and $\Theta$ be the Hermitian connection
and the curvature form with respect to $h$, respectively. Let $\xi$
be a global holomorphic vector field on $X$ and consider the
operator
$$L(\xi):=[\xi, \ \cdot  \ ]-\nabla_{\xi}(\ \cdot \ ):  T^{1,0}X \longrightarrow  T^{1,0}X  .$$

 Let $\phi$ be an invariant polynomial of
degree $n+k$; the \textit{Futaki-Morita integral invariant }  is
defined by
$$f_{\phi}(\xi)=\int_X \bar{\phi}\big(\underbrace{L(\xi),...,L(\xi)}_{k\,\,times},\underbrace{\frac{i}{2\pi}\Theta,...,\frac{i}{2\pi}\Theta}_{n\,\,times}\big),$$
where $\bar{\phi}$ denotes the polarization of $\phi$.  
Morita and
Futaki proved  in  \cite{futa} that the  definition of
$f_{\phi}(\xi)$ does not depend on the choice of  Hermitian
metric $h$.
It is well known that  the Futaki-Morita integral invariant can be
calculated via a  Bott type residue  formula for non-degenerated
holomorphic vector fields , see \cite{futaki}, \cite{futa},
\cite{futa-Mor} and \cite{tian} in the orbifold case. In this work
we prove a residue formula for   holomorphic vector fields with
isolated and possibly degenerated singularities in terms of
Grothendieck's residues(see \cite[Chapter 5]{gri}). 
\begin{teo} \label{teo1} Let $\xi\in\eta(X)$ a holomorphic vector field  with only isolated singularities, then
$${n+k\choose n}f_{\phi}(\xi)=(-1)^k \sum_{p\, \in Sing(\xi)}\frac{1}{\# G_p}\mbox{Res}_{\,\tilde{p}}\left\{\frac{\phi\big(J\tilde{\xi}\,\big)\,d\tilde{z}_1\wedge\dots\wedge d\tilde{z}_n}{\tilde{\xi}_1\dots\tilde{\xi}_n}\right\},$$
where, given $p$ such that $\xi(p)=0$ and $(\widetilde{U},
\tilde{p}) \umapright{\pi_p} (\widetilde{U}/G_p,p)$ denotes the
projection: $\tilde{\xi}= \pi_p^\ast \xi$,
$J{\tilde{\xi}}=\displaystyle\left(\frac{\partial\tilde{\xi}_i}{\partial
\tilde{z}_j}\right)_{1\leq i,\,j\leq n}$ and
$\mbox{Res}_{\,\tilde{p}}\displaystyle\left\{\frac{\phi\big(J\tilde{\xi}\,\big)\,d\tilde{z}_1\wedge\dots\wedge
d\tilde{z}_n}{\tilde{\xi}_1\dots\tilde{\xi}_n}\right\}$ is
Grothendieck's point residue and $(\tilde{z}_1,\dots,\tilde{z}_n)$ is a germ of  coordinate system on 
$(\widetilde{U},\tilde{p})$.
\end{teo}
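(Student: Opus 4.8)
\emph{Strategy and reformulation.} The plan is to adapt Bott's localization argument for the ``equivariant curvature'' of $\xi$ to the complement of $\mathrm{Sing}(\xi)$, and then to carry out the residue computation on the local uniformizers, the factor $1/\#G_p$ being produced by $G_p$-averaging. The first step is to rewrite the left-hand side intrinsically: since $\bar\phi$ is symmetric and $L(\xi)$, resp. $\frac{i}{2\pi}\Theta$, is an endomorphism-valued $0$-form, resp. $2$-form, of $T^{1,0}X$, the component of homogeneous degree $2n$ of $\phi\bigl(\frac{i}{2\pi}\Theta-L(\xi)\bigr)$ equals $\binom{n+k}{n}\bar\phi\bigl((-L(\xi))^{k},(\tfrac{i}{2\pi}\Theta)^{n}\bigr)=(-1)^{k}\binom{n+k}{n}\bar\phi\bigl(L(\xi)^{k},(\tfrac{i}{2\pi}\Theta)^{n}\bigr)$. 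Thus, setting $\mathcal W:=\bigl[\phi\bigl(\tfrac{i}{2\pi}\Theta-L(\xi)\bigr)\bigr]_{(2n)}$, a smooth $2n$-form on $X$ whose class is independent of $h$ by \cite{futa} ($\frac{i}{2\pi}\Theta-L(\xi)$ being the equivariant curvature underlying the Futaki--Morita invariant),
$$\binom{n+k}{n}f_\phi(\xi)=(-1)^{k}\int_X\mathcal W .$$
This already produces the global sign $(-1)^{k}$, and it remains to prove $\int_X\mathcal W=\sum_{p\in\mathrm{Sing}(\xi)}\frac1{\#G_p}\,\mathrm{Res}_{\tilde p}\bigl\{\tfrac{\phi(J\tilde\xi)\,d\tilde z_1\wedge\cdots\wedge d\tilde z_n}{\tilde\xi_1\cdots\tilde\xi_n}\bigr\}$.

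\emph{Localization on the regular part and passage to uniformizers.} On $U_0:=X\setminus\mathrm{Sing}(\xi)$ the field $\xi$ is nowhere zero, so, following Bott's argument (see \cite{futaki}, \cite{futa}), one constructs a connection on $T^{1,0}X|_{U_0}$ adapted to $\xi$ for which the degree-$2n$ part of the equivariant Chern--Weil form of $\xi$ vanishes; the corresponding equivariant Chern--Simons transgression is a $(2n-1)$-form $\Psi$ on $U_0$ with $d\Psi=\mathcal W|_{U_0}$. Removing small balls $B_p(\varepsilon)$ around the zeros, applying Stokes, and letting $\varepsilon\to0$ (the integral of the smooth form $\mathcal W$ over the balls tends to $0$) yields $\int_X\mathcal W=-\sum_{p}\lim_{\varepsilon\to0}\int_{S_p(\varepsilon)}\Psi$. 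Near $p$, pulling $\Psi$ back through $\pi_p\colon(\widetilde U,\tilde p)\to(\widetilde U/G_p,p)$, the $G_p$-invariant form $\widetilde\Psi:=\pi_p^\ast\Psi$ satisfies $\int_{S_p(\varepsilon)}\Psi=\frac1{\#G_p}\int_{\widetilde S_{\tilde p}(\varepsilon)}\widetilde\Psi$, the hypothesis $\mathrm{codim}\,Sing(X)\geq2$ guaranteeing that the $G_p$-fixed locus contributes nothing on the sphere. One is thus reduced to the following local statement on a ball $\widetilde U\subset\C^n$ about an isolated zero of the holomorphic field $\tilde\xi=\pi_p^\ast\xi$:
$$-\lim_{\varepsilon\to0}\int_{\widetilde S_{\tilde p}(\varepsilon)}\widetilde\Psi=\mathrm{Res}_{\tilde p}\!\left\{\frac{\phi(J\tilde\xi)\,d\tilde z_1\wedge\cdots\wedge d\tilde z_n}{\tilde\xi_1\cdots\tilde\xi_n}\right\}.$$

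\emph{The local residue.} Since $\lim_{\varepsilon\to0}\int_{\widetilde S_{\tilde p}(\varepsilon)}\widetilde\Psi$ depends only on the germ of $\tilde\xi$ at $\tilde p$ (metric-independence of $\mathcal W$), I would compute it using the flat coordinate-frame connection, for which $\Theta\equiv0$ and for which $-L(\tilde\xi)=\nabla_{\tilde\xi}-[\tilde\xi,\,\cdot\,]$ is, in the frame $(\partial/\partial\tilde z_1,\dots,\partial/\partial\tilde z_n)$, exactly the holomorphic matrix $J\tilde\xi$; hence the equivariant curvature reduces there to $J\tilde\xi$ and the relevant transgression becomes the equivariant Chern--Simons form interpolating between the flat connection and the $\xi$-adapted one, carrying the scalar factor $\phi(J\tilde\xi)$. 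One then identifies the restriction of this transgression to small spheres with the Cauchy--Fantappi\`e (Bochner--Martinelli) form associated to the tuple $(\tilde\xi_1,\dots,\tilde\xi_n)$ with numerator $\phi(J\tilde\xi)$, whose limiting integral over $\widetilde S_{\tilde p}(\varepsilon)$ is by definition Grothendieck's point residue; the orientations are arranged so that the Stokes sign is absorbed and no further sign survives. An alternative for this step is to settle first the non-degenerate case, where the residue collapses to $\phi(J\tilde\xi(\tilde p))/\det J\tilde\xi(\tilde p)$ and the identity is the classical Bott residue theorem, and then to invoke the invariance of Grothendieck's residue under holomorphic perturbations of $\tilde\xi$ that split the degenerate zero into non-degenerate ones.

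\emph{Main obstacle.} The heart of the argument — and what makes the Grothendieck-residue formulation necessary — is precisely this local identification when the zero is degenerate: matching the curvature/connection transgression with the residue symbol and pinning down all orientation and normalization conventions. By contrast, the Bott vanishing on $U_0$ is standard, and, thanks to $\mathrm{codim}\,Sing(X)\geq2$, the orbifold bookkeeping (gluing of uniformizers, extraction of the weight $1/\#G_p$, and orbifold Stokes) reduces to the manifold case.
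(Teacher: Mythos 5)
Your proposal follows essentially the same route as the paper: a Bott--Chern transgression argument localizing $\binom{n+k}{n}f_{\phi}(\xi)$ (with the sign $(-1)^k$) at the zeros of $\xi$, orbifold Stokes producing the weights $1/\#G_p$ on the uniformizers, and identification of each limiting sphere integral with Grothendieck's residue via the Bochner--Martinelli kernel attached to $(\tilde{\xi}_1,\dots,\tilde{\xi}_n)$ with numerator $\phi(J\tilde{\xi})$. The local step you flag as the main obstacle is exactly what the paper resolves by taking $\omega=\sum_i\overline{\tilde{\xi}_i}\,d\tilde{z}_i/\sum_i\tilde{\xi}_i\overline{\tilde{\xi}_i}$ and a metric that is flat near each singular point (so only $\psi_0=\omega\wedge(\bar{\partial}\omega)^{n-1}\phi(J\tilde{\xi})$ survives there), and then quoting Martinelli's integral formula, which is valid for degenerate isolated zeros; so your sketch is correct modulo this standard reference.
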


We note that such residue can be calculated using Hilbert's
Nullstellensatz and Martinelli's integral formula. In fact, if
$\tilde{z}_i^{\,a_i}=\sum_{j=1}^n b_{ij} \, \tilde{\xi}_j$, then
(see \cite{norg})
\begin{equation}\label{ident}
\mbox{Res}_{\,\tilde{p}}\left\{\frac{\phi\big(J\tilde{\xi}\,\big)\,d\tilde{z}_1\wedge\dots\wedge
d\tilde{z}_n}{\tilde{\xi}_1\dots\tilde{\xi}_n}\right\} =
\frac{1}{\prod_{i=1}^n (a_{i}-1)!} \left(
\frac{\partial^{\,n}}{\partial
\tilde{z}_1^{\,a_1},...,\tilde{z}_n^{\,a_n}}
\big(Det(b_{ij})\phi(J\tilde{\xi})\big)\right)(\tilde{p}).
\end{equation}
\noindent Moreover, note that if $p\in Sing(\xi)$ is a not
degenerated  singularity  of $\xi$  then
$$\mbox{Res}_{\,\tilde{p}}\left\{\frac{\phi\big(J\tilde{\xi}\,\big)\,d\tilde{z}_1\wedge\dots\wedge d\tilde{z}_n}{\tilde{\xi}_1\dots\tilde{\xi}_n}\right\} = \frac{\phi\big(J\tilde{\xi}(\tilde{p})\big)}{Det\big(J\tilde{\xi}(\tilde{p})\big)}.$$
Theorem $1$ allows us  to calculate the Morita-Futaki invariant for holomorphic 
vector fields with possible degenerated singularities. For instance,
in recent  work   \cite{li}  the Futaki-Bott residue for vector fields with
degenerated singularities, on the blowup of K\"ahler surfaces, was
calculated by Li and Shi. Compare the equation (\ref{ident}) with
Lemma 3.6 of  \cite{li}.

Futaki showed in  \cite{futaki} that if $X$ admits  a K\"ahler-Einstein metric then
$f_{C_{1}^{\,n+1}}\equiv 0$, where  $C_1=Tr$ denotes  the trace, i.e., the first  elementary symmetric polynomial. Taking  $\phi=C_1^{\,n+1}=Tr^{\,n+1}$, we obtain 
the following corollary of  Theorem \ref{teo1}.
\begin{cor} \label{cor3} Let $\xi\in\eta(X)$ with only isolated singularities, then
$$f_{C_{1}^{\,n+1}}(\xi)=\frac{-1}{(n+1)^2} \sum_{p\, \in Sing(\xi)}\frac{1}{\# G_p}\mbox{Res}_{\,\tilde{p}}\left\{\frac{Tr^{\,n+1}\big(J\tilde{\xi}\,\big)\,d\tilde{z}_1\wedge\dots\wedge d\tilde{z}_n}{\tilde{\xi}_1\dots\tilde{\xi}_n}\right\}.$$
\end{cor}
This result generalizes    the Proposition $1.2$ of \cite{tian}. It
is well known that projective planes  are K\"ahler-Einstein. However,
the non-existence of   K\"ahler-Einstein metrics on singular weighted
projective planes    was shown in previous works, see for example \cite{V}. As an application of Theorem \ref{teo1}
we will give, in Section \ref{WPS}, a new proof 
of this fact.

\section{  Non-existence of K\"ahler-Einstein metric on  weighted  projective planes }\label{WPS}

Here we consider weighted complex projective planes with only
isolated singularities, which we briefly recall.

Let $w_{0},w_{1},w_{2}$ be positive integers two by two co-primes, set
$w:=(w_{0},w_{1},w_{2})$ and $|w|:=w_{0}+w_{1}+w_{2}$. Define an
action of $\C^\ast$ in $\C^{3} \setminus \{0\}$ by
$$
\begin{array}{ccc}
\C^\ast \times \C^{3} \setminus \{0\} & \longrightarrow & \C^{3} \setminus \{0\} \\
\lambda . (z_0, z_1, z_2) & \longmapsto &  (\lambda^{w_0} z_0, \lambda^{w_1} z_1, \lambda^{w_2} z_2)\\
\end{array}
$$
and let $\wP_w^2 := \C^{3} \setminus \{0\} / \sim$. The weights
are chosen to be pairwise co-primes in order to assure a finite number
of singularities and to give $\wP_w^2$ the structure of an
effective, abelian, compact orbifold of dimension $2$.  The singular
locus is:
$$
Sing (\wP_w^2) = \big\{[1:0:0]_{\omega}, [0:1:0]_{\omega},[0:0:1]_{\omega}\big\}.
$$
We have the canonical projection
$$
\begin{array}{ccc}
\pi: {\C}^{3} \setminus \{0\} & \longrightarrow& \wP_w^2 \hfill \\
(z_0, z_1, z_2)& \longmapsto & [z_0^{w_0}: z_1^{w_1} : z_2^{w_2}]_w
\end{array}
$$
and the natural map
$$
\begin{array}{ccc}
\hskip 32pt \varphi_w: \wP^n & \longrightarrow& \wP_w^n \hfill \\
{ [z_0: z_1 : z_2]}  & \longmapsto &   [z_0^{w_0}: z_1^{w_1} : z_2^{w_2}]_w
\end{array}
$$
of degree $\deg \varphi_w= w_0 w_1 w_2$. The map $\varphi_w$ is
\emph{good} in the sense of \cite[section 4.4]{alr}, which
means, among other things, that V-bundles behave well under
pullback. It is shown in \cite{man} that there is a line V-bundle
$\scr{O}_{\wP^2_w} (1)$ on $\wP^2_w$, unique up to isomorphism, such
that
$$
\varphi_w^\ast \scr{O}_{\wP_w^2} (1) \cong  \scr{O}_{\wP^2} (1)
$$
and, by naturality, $c_1(\varphi_w^\ast \scr{O}_{\wP_w^2} (1)) = c_1
( \scr{O}_{\wP^2} (1))= \varphi_w^\ast c_1(\scr{O}_{\wP_w^2} (1))$,
from which we obtain the Chern number
$$
[\wP_w^2] \frown \left( c_1( \scr{O}_{\wP_w^2} (1))\right)^n =
\int\limits_{\wP_w^n} \left( c_1( \scr{O}_{\wP_w^2} (1))\right)^2 =
\dfrac{1}{w_0 w_1 w_2}
$$
since
$$
1 = \int\limits_{\wP^2} \left(c_1(\scr{O}_{\wP^2} (1))\right)^2 =
\int\limits_{\wP^2} \varphi_w^\ast\left( c_1( \scr{O}_{\wP_w^2}
(1))\right)^2 = (\deg \varphi_w) \int\limits_{\wP_w^2} \left( c_1(
\scr{O}_{\wP_w^2} (1))\right)^2.
$$
There exist an  Euler type  sequence on $\wP^n_w$
$$
0\longrightarrow \underline{\mathbb{C}} \longrightarrow
\bigoplus_{i=0}^{2} \scr{O}_{\wP_w^2}(w_i)\longrightarrow
T\mathbb{P}_{w}^{2}\longrightarrow 0,
$$
where
\begin{itemize}
  \item[(i)] $1\longmapsto ({w_0}{z_0},{w_1}{z_1},{w_2}{z_2})$.
  \item[(ii)] $(P_{0},P_{1},P_{2})\longmapsto \pi_{\ast}\left(\sum_{i=0}^{2}P_{i}\frac{\partial}{\partial z_{i}}\right)$.
\end{itemize}

It is well known that the non-singular   weighted projective planes 
admit  K\"ahler-Einstein metrics.  On the other side, singular
weighted projective spaces do not admit K\"ahler-Einstein
metrics, see  \cite{V}. We give  a
simple proof   of the non-existence of   K\"ahler-Einstein metrics  on
singular $\mathbb{P}_{\omega}^{2}$  by using   Corollary
\ref{cor3}.

\begin{teo}
\emph{The singular weighted projective space
$\mathbb{P}_{\omega}^{2}$   does not admit any
K\"ahler-Einstein metric}
\end{teo}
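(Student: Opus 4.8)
The plan is to argue by contradiction via Corollary \ref{cor3}. If $\wP_\omega^2$ admitted a K\"ahler--Einstein metric then, as observed just after the definition of Futaki's character, $f\equiv 0$ on $\eta(\wP_\omega^2)$; so it suffices to produce a single holomorphic vector field with isolated singularities on which the right hand side of Corollary \ref{cor3} does not vanish. (Recall $\wP_\omega^2$ is Fano, $c_1>0$, so that Corollary is available here.)

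For $\lambda=(\lambda_0,\lambda_1,\lambda_2)\in\C^3$ let $\xi_\lambda:=\pi_\ast\!\big(\sum_i\lambda_i z_i\,\partial/\partial z_i\big)$ be the global holomorphic vector field on $\wP_\omega^2$ furnished by the Euler sequence (\ref{euler}); it depends only on the class of $\lambda$ modulo $\C\cdot(w_0,w_1,w_2)$, since $\pi_\ast(\sum_i w_i z_i\,\partial/\partial z_i)=0$ by exactness of (\ref{euler}). Inspecting the three affine charts $\{z_i\neq 0\}$ one checks that, for $\lambda$ avoiding finitely many hyperplanes, $Sing(\xi_\lambda)$ is exactly $\{\mathsf{e}_0,\mathsf{e}_1,\mathsf{e}_2\}=Sing(\wP_\omega^2)$ and all three singularities are non-degenerate. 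Concretely, near $\mathsf{e}_i$ the local uniformizing chart is $\C^2/(\Z/w_i)$, so $\#G_{\mathsf{e}_i}=w_i$, and in the natural coordinates on $\C^2$ the pulled-back field $\wti{\xi}_\lambda$ is linear and diagonal with eigenvalues $\mu^{(i)}_j:=\lambda_j-\tfrac{w_j}{w_i}\lambda_i$ ($j\neq i$); non-degeneracy at $\mathsf{e}_i$ amounts to $w_i\lambda_j\neq w_j\lambda_i$ for $j\neq i$.

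Feeding this into Corollary \ref{cor3} with $n=2$ and $\phi=Tr^{3}$, each point residue is $Tr^{3}(J\wti{\xi}_\lambda)/Det(J\wti{\xi}_\lambda)=(\mu^{(i)}_j+\mu^{(i)}_k)^3/(\mu^{(i)}_j\mu^{(i)}_k)$, so, writing $\nu_{ij}:=w_i\lambda_j-w_j\lambda_i$,
\[
-9\,f(\xi_\lambda)=\sum_{i=0}^{2}\frac{1}{w_i}\,\frac{\big(\mu^{(i)}_j+\mu^{(i)}_k\big)^{3}}{\mu^{(i)}_j\,\mu^{(i)}_k}
=\sum_{i=0}^{2}\frac{\big(\textstyle\sum_{j\neq i}\nu_{ij}\big)^{3}}{w_i^{2}\,\prod_{j\neq i}\nu_{ij}}=:F(\lambda).
\]
A short computation shows the apparent poles of $F$ along each $\{\nu_{ij}=0\}$ cancel (on $\{\nu_{ij}=0\}$ the two offending residues coincide), so $F$ is a polynomial; being homogeneous of degree one and invariant under $\lambda\mapsto\lambda+c(w_0,w_1,w_2)$, it is a linear form on $\C^3/\C\cdot(w_0,w_1,w_2)\cong\C^2$. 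Evaluating on the coordinate vectors gives $F(e_i)=|w|^{2}(2w_i-w_j-w_k)/(w_i^{2}w_jw_k)$, $\{i,j,k\}=\{0,1,2\}$, which is at bottom the factorization $3w_k^{2}s+2w_k^{3}-s^{3}=-(s+w_k)^{2}(s-2w_k)$ with $s=w_i+w_j$. Hence $F\equiv 0$ would force $2w_i=w_j+w_k$ for two (hence all) triples, i.e.\ $w_0=w_1=w_2$, i.e.\ (pairwise coprimality) $w=(1,1,1)$ and $\wP_\omega^2\cong\wP^2$ smooth, contrary to hypothesis. Therefore $F\not\equiv 0$, and for a suitable generic $\lambda$ the field $\xi_\lambda$ has isolated non-degenerate singularities with $f(\xi_\lambda)=-F(\lambda)/9\neq 0$, contradicting $f\equiv 0$.

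The two genuinely delicate points are: (i) pinning down the local orbifold charts at the $\mathsf{e}_i$ and the $\Z/w_i$-actions, so that the eigenvalues $\mu^{(i)}_j$ and the orders $\#G_{\mathsf{e}_i}=w_i$ entering Corollary \ref{cor3} are exactly right; and (ii) the algebra showing $F$ is a non-trivial linear form --- the pole cancellation together with the displayed factorization. The remainder is routine bookkeeping.
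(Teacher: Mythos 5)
Your argument is correct, and up to the last algebraic step it is the same as the paper's: both use the diagonal field $\xi_\lambda=\pi_\ast\big(\sum_k\lambda_k z_k\,\partial/\partial z_k\big)$, check that for $\lambda$ off the three hyperplanes $\{\nu_{ij}=0\}$ its zeros are exactly the three orbifold points, with local group of order $w_i$ at $\mathsf{e}_i$ and linear part $\mathrm{diag}\big(\lambda_j-\tfrac{w_j}{w_i}\lambda_i\big)_{j\neq i}$ in the uniformizing chart, and then apply Corollary \ref{cor3}, arriving at the identical expression $-9f(\xi_\lambda)=\sum_i\big(\sum_{j\neq i}\nu_{ij}\big)^3/\big(w_i^2\prod_{j\neq i}\nu_{ij}\big)$. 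Where you genuinely diverge is in proving this is not identically zero. The paper clears denominators, obtaining the degree-$4$ homogeneous polynomial $\zeta(a_0,a_1,a_2)$ expanded in the Appendix, normalizes $1\le w_0\le w_2<w_1$, and checks that the coefficient of $a_0^2a_1a_2$ cannot vanish (it factors as $3w_0w_1^2w_2^2(w_2-w_1)(w_0+w_1+w_2)^2$). You instead note that the polar parts along each $\{\nu_{ij}=0\}$ cancel pairwise, so $F(\lambda)=-9f(\xi_\lambda)$ extends to a homogeneous degree-one polynomial, i.e.\ a linear form (vanishing on $(w_0,w_1,w_2)$), and evaluate it on the coordinate vectors: $F(e_i)=|w|^2(2w_i-w_j-w_k)/(w_i^2w_jw_k)$, so $F\equiv 0$ would force $2w_i=w_j+w_k$ for all $i$, hence $w_0=w_1=w_2$ and, by pairwise coprimality, the smooth $\mathbb{P}^2$, excluded by the singularity hypothesis. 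I checked the evaluation and it is correct; your route buys a structural explanation of the paper's computation (up to a constant, the Appendix polynomial is $\prod_{i<j}\nu_{ij}$ times your linear form, which is why one coefficient suffices there) and avoids both the brute-force expansion and the weight normalization. Two small things to tidy: since $e_i$ lies on $\{\nu_{jk}=0\}$, the value $F(e_i)$ must be taken on the polynomial extension (a limit of the rational expression), which your polynomiality claim licenses but deserves an explicit sentence; and the indices in your factorization identity are misaligned --- for $F(e_i)$ it should read $3w_i^2s+2w_i^3-s^3=-(s+w_i)^2(s-2w_i)$ with $s=w_j+w_k$ --- though the identity itself and the resulting closed form are right.
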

\begin{proof}
Choose $a_{0}, a_{1},a_{2}\in \mathbb{C}^*$ such that  $a_{i}w_{j}\neq
a_{j}w_{i}$, for all  $ i\neq j$.
Suppose, without loss of generality,  that $1 \leq w_0 \leq w_2 < w_1$.
Consider the holomorphic vector field on $\mathbb{P}_{\omega}^{2}$ given 
by
$$\xi_a=\sum_{k=0}^{2}a_{k}Z_{k}\frac{\partial}{\partial Z_{k}} \in H^{0}(\mathbb{P}^{2}_{\omega},T\mathbb{P}^{2}_{\omega}),$$
where $(Z_0,Z_1 ,Z_3)$ denotes   the homogeneous coordinate system. 

\noindent The local expression of $\xi$ over $U_i=\{ [Z_0:Z_1 :Z_3] \in \mathbb{P}^2; \ Z_i \neq 0  \}$ is given by
$${\xi_a}|_{U_i}=\sum_{\stackrel{k=0}{k\neq i}}^{2}\left(a_{k}-a_{i}\frac{w_{k}}{w_{i}}\right)Z_{k}\frac{\partial}{\partial Z_{k}},$$
\noindent Therefore,  the singular set $Sing({\xi}|_{U_i})$ is reduced to $\{0\}$ and it is nondegenerate.
In general
$$Sing(\xi_a)= \big\{[1:0:0]_{\omega}, [0:1:0]_{\omega},[0:0:1]_{\omega}\big\}=Sing(\mathbb{P}^{2}_{\omega}).$$
\noindent It follows from Corollary  \ref{cor3}  that 
$$f(\xi_a) = \frac{-1}{3^2} \sum_{i=0}^{2} \frac{1}{w_{i}^{2}} \frac{\big(\sum_{k\neq i} (a_{k}w_{i}-a_{i}w_{k})\big)^{3}} {\prod_{k\neq i}(a_{k}w_{i}-a_{i}w_{k})}.$$
Thus
$$\zeta(a_0,a_1,a_2)=-3^2 w_0^2 w_1^2 w_2^2 \prod_{0\leq i < j \leq 2}(a_{i}w_{j}-a_{j}w_{i})f(\xi_a)=
$$
\noindent $
(3 w_1^5 w_2^2 w_0 -3 w_1^4 w_2^3 w_0 +3 w_1^3 w_2^4 w_0  +3 w_1^2 w_2^5 w_0 -3 w_0^4 w_2^2 w_1^2 +3 w_0^3 w_2^3 w_1^2+
6 w_0^2 w_2^4 w_1^2+$\\

\noindent $ +3 w_0^4 w_1^2 w_2^2 -3 w_0^3 w_1^3 w_2^2 -6 w_0^2 w_1^4 w_2^2)  \cdot a_1 a_2 a_0^2+\cdots$
\\
\\
\noindent is a homogeneous polynomial of degree $4$ in the variables $a_0,a_1,a_2$.
Suppose by contradiction that $\zeta(a_0,a_1,a_2)\equiv 0$. In particular the coefficient of the monomial $a_0^2 a_1 a_2$ is  zero. Thus, we have the following equation
$$w_2(w_1w_2+w_2^2+w_0^2+2w_0w_2)=w_1(w_1w_2+w_1^2+w_0^2+2w_0w_1).$$
This contradicts  $1 \leq w_0 \leq w_2 < w_1$. Thus the non-vanishing of  $\zeta(a_0,a_1,a_2)$ implies that  $f(\xi_a)$ is not zero.
Therefore  $\mathbb{P}_{\omega}^{2}$   does
not admit   K\"ahler-Einstein metrics.

\end{proof}
\section{Proof of Theorem 1}
For the proof we will use   Bott-Chern's transgression method, see
\cite{bott} and \cite{ch}.

Let $p_1,\dots,p_m$ be the zeros of $\xi$. Let $\{U_\beta\}$ be an
open cover orbifold of $X$ $(\,\varphi_\beta:\widetilde{U}_{\beta}
\rightarrow U_\beta \subset X \mbox{\,\,coordinate map}\,)$. Suppose
that $\{U_\beta\}$ is a trivializing neighborhood for the holomorphic tangent orbibundle   $TX$(see \cite[section 2.3]{alr})  of $X$ and that
we have disjoint neighborhoods coordinates $U_\alpha$ with $\,\,
p_{\alpha}\in U_{\alpha}$ and  $p_{\alpha}\not\in U_{\beta}$ if
$\alpha\neq \beta$ . On each ${\widetilde{U}_\alpha}$, take local
coordinates
$\tilde{z}^{\alpha}=(\tilde{z}^{\alpha}_1,\dots,\tilde{z}^{\alpha}_n)$
and the  holomorphic frame  $\{\frac{\partial}{\partial
\tilde{z}_1^\alpha},\dots,\frac{\partial}{\partial
\tilde{z}_n^\alpha}\}$ of $TX$. Thus, we have a local representation
$${\tilde{\xi}}^\alpha=\sum{\tilde{\xi}}^\alpha_i \frac{\partial}{\partial \tilde{z}_i^\alpha},$$

\noindent where ${\tilde{\xi}}^\alpha_i$ are holomorphic functions
in $\widetilde{U}_\alpha$, $1\leq i\leq n$.
Let $\tilde{h}_\alpha$ the Hermitian metric in
$\widetilde{U}_{\alpha}$ defined by $\langle\partial/\partial
\tilde{z}_{i}^{\alpha}, \partial/\partial
\tilde{z}_{j}^{\alpha}\rangle=\delta^{\,i}_{j}$. Also consider
$\widetilde{U}'_{\alpha}\subset \widetilde{U}_{\alpha}$ e
${U}'_{\alpha}=\varphi_\alpha(\widetilde{U}'_{\alpha})$ for each
$\alpha$. Take a Hermitian metric $h_0$ in any
$X\backslash\cup_{\alpha}\{p_\alpha\}$ and $\{\rho_{0},
\rho_{\alpha}\}$ a partition of unity subordinate to the cover
$\{X\backslash\cup_{\alpha} \overline{U'_{\alpha}}\,, {U}_{\alpha}
\}_{\alpha}$. Define a Hermitian metric $h=\rho_{0}h_{0}+\sum
\rho_{\alpha}h_{\alpha}$ in $X$. Then we have that for every
$\alpha$, the metric curvature
$\Theta\equiv0$ in $U'_{\alpha}$.

\noindent Consider  the matrix of the metric connection $\nabla$ in the open $\widetilde{U}^{\beta}$ given by 
$\theta^{\beta} =(\sum_{k}\Gamma^{\beta j}_{ik}d\tilde{z}_{k}^{\beta}$).
\noindent The local expression of $L(\xi)$   is given by
$\tilde{E}^{\beta}=(\tilde{E}^{\beta}_{ij})$ such that
$$\tilde{E}^{\beta}_{ij}=-\frac{\partial\tilde{\xi}_{i}^{\beta}}{\partial \tilde{z}_{j}^{\beta}}-\sum_{s}\Gamma^{\beta i}_{js}\tilde{\xi}^{\beta}_{s},$$
\noindent see \cite{bott} and  \cite{gri}. We indicate by $\mathcal{A}^{p,\,q}(X)$
the vector space of complex-valued $(p+q)$-forms on $X$ of type $(p,q)$. 
Define
$$\phi_{r}:={n+k\choose r}\bar{\phi}(\underbrace{E,...,E}_{n+k-r},\underbrace{\Theta,...,\Theta}_{r})\in
\mathcal{A}^{r,\,r}(X)\,\,\,\, r=0,...,n.$$
\noindent Let $\omega\in\mathcal{A}^{1,0}(X)$ in $X\backslash
Sing(\xi)$, with $\omega(\xi)=1$. Following the  Bott's idea (see \cite{bott}), it is
sufficient to show that there exists $\psi$ such that
$i(\xi)(\bar{\partial}\,\psi+\phi_{n})=0$
on $X\backslash Sing(\xi)$. We take
$\psi=\sum_{r=0}^{n-1}\psi_{r}$ such that
$$\psi_{r}=\omega\wedge(\bar{\partial}\omega)^{n-r-1}\wedge\phi_{r} \in \mathcal{A}^{n,\,n-1}(X)\ \ \ \ r=0,...,n-1.$$
\noindent The following formulas hold (see \cite{bott} or
\cite{gri}) : \vspace{0.3cm}
\begin{enumerate}
    \item [a)] $\bar{\partial}\,\Theta=0$, $\bar{\partial}E=i(\xi)\Theta$
    \item [b)] $\bar{\partial}\,\phi_{r}=i(\xi)\phi_{r+1}\,\,\,\, r=0,...,n+1$
    \item [c)] $i(\xi)\bar{\partial}\omega=0$
\end{enumerate}
Le us prove   $b)$ : since $\bar{\partial}\,\Theta=0$ and
$\bar{\partial}E=i(\xi)\Theta$, we have
$$\bar{\partial}\,\phi_{r}={n+k\choose r}\sum_{i=1}^{n+k-r}\bar{\phi}(E,...,i(\xi)\Theta,...,E,\Theta,...,\Theta)
=i(\xi)\phi_{r+1}.$$
\noindent Therefore, $a)$, $b)$ and $c)$ implies that  on
$X\backslash Sing(\xi)$ we get
$$i(\xi)(\bar{\partial}\,\psi+\phi_{n})=0.$$
Therefore $d\psi=\bar{\partial}\psi=-\phi_{n}$ on  $X\backslash
Sing(\xi)$. Thus, by   Satake-Stokes Theorem  we have
\begin{eqnarray}
{n+k\choose n}f_{\phi}(\xi)&=&\left(\frac{i}{2\pi}\right)^n \int_X
\phi_n=\left(\frac{i}{2\pi}\right)^n\lim_{\epsilon\rightarrow\,0}
\int_{X\backslash \cup_{\alpha}B_{\epsilon}(p_{\alpha})} \phi_n \nonumber\\
&=&-\left(\frac{i}{2\pi}\right)^n\lim_{\epsilon\rightarrow\,0}
\int_{X\backslash \cup_{\alpha}B_{\epsilon}(p_{\alpha})} d\psi
=\left(\frac{i}{2\pi}\right)^n\lim_{\epsilon\rightarrow\,0}
\sum_{\alpha}\int_{\partial B_{\epsilon}(p_{\alpha})} \psi^{\alpha},
\label{eq:eq6}
\end{eqnarray}
 where is $ B_{\epsilon}(p_{\alpha})=B_{\epsilon}(\tilde{p}_{\alpha})/G_{p_\alpha}$ and $B_{\epsilon}(\tilde{p}_{\alpha})$ is 
an Euclidean ball  centered at $\tilde{p}_{\alpha}$ such that $\overline{B_{\epsilon}(\tilde{p}_{\alpha})} \subset U'_{\alpha}$.
Since our metric is Euclidean in
$B_{\epsilon}(\tilde{p}_{\alpha})$, its   connection is zero and
$$\tilde{E}^{\alpha}_{ij}=-\frac{\partial\tilde{\xi}_{i}^{\alpha}}{\partial
\tilde{z}_{j}^{\alpha}}.$$

Now, by our choice of metric, $\Theta$ and hence   $\phi_{r}$, for
$r>0$, vanishes identically in $B_{\epsilon}(\tilde{p}_{\alpha})$.
Then, we have
$$\tilde{\psi}^{\alpha}=\tilde{\psi}_{0}^{\alpha}={\omega \wedge (\overline{\partial} \,\omega)}^{n-1}\phi(\tilde{E}^{\alpha})= (-1)^{n+k}{\omega \wedge (\overline{\partial} \,\omega)}^{n-1}\phi(J\tilde{\xi}^{\,\alpha})$$ on  $B_{\epsilon}(\tilde{p}_{\alpha}).$
Therefore
\begin{eqnarray}\label{eq:eq7}
\tilde{\psi}^{\alpha}=(-1)^k{\omega \wedge (\overline{\partial}
\,\omega)}^{n-1}\phi(J\tilde{\xi}^{\alpha}). \label{eq:eq2}
\end{eqnarray}\\
\noindent Consider the map $\Phi:{\mathbb{C}}^n \rightarrow
{\mathbb{C}}^{2n}$ given by
$\Phi(\tilde{z})=(\tilde{z}+\tilde{\xi}(\tilde{z}),\tilde{z})$.
There is a $(2n, 2n-1)$ closed form $\beta_n$ in ${\mathbb{C}}^{2n}
\backslash \{0\}$ (the Bochner-Martinelli kernel) such that
\begin{eqnarray}\label{eq:eq8}
\Phi^{\ast}\beta_n =\left(\frac{i}{2\pi}\right)^n \, {\omega \wedge
(\overline{\partial} \,\omega)}^{n-1}.
\end{eqnarray}
 Finally, if we substitute  (\ref{eq:eq7}) and (\ref{eq:eq8}) in (\ref{eq:eq6}),  and by using   Martinelli's formula ( \cite[pg. 655]{gri})
$$
\int_{\partial B_{\epsilon}(\tilde{p}_{\alpha})}
\phi(J\tilde{\xi}^{\,\alpha})\,\Phi^{\ast}\beta_n=
\mbox{Res}_{\,\tilde{p}_{\alpha}}\left\{\frac{\phi\big(J\tilde{\xi}^{\,\alpha}\big)\,d\tilde{z}_1\wedge\dots\wedge
d\tilde{z}_n}{\tilde{\xi}_1\dots\tilde{\xi}_n}\right\}
$$
we obtain
\begin{eqnarray*}
{n+k\choose
n}f_{\phi}(\xi)&=&
 (-1)^k \sum_{\alpha} \frac{1}{\# G_{p_{\alpha}}} \mbox{Res}_{\,\tilde{p}_{\alpha}}\left\{\frac{\phi\big(J\tilde{\xi}^{\,\alpha}\big)\,d\tilde{z}_1\wedge\dots\wedge d\tilde{z}_n}{\tilde{\xi}_1\dots\tilde{\xi}_n}\right\}.\\
\end{eqnarray*}
\bigskip
\noindent\textsc{Acknowledgements}.
We are grateful to   Marcio Soares and Marcos Jardim for many stimulating
conversations on this subject.



\begin{thebibliography}{99}

\bibitem{alr} \textsc{A. Adem, J. Leida, Y. Ruan}, \textsl{Orbifolds and String Topology}, Cambridge University Press, ISBN-10 0-511-28288-5, 2007.

\bibitem{bott} \textsc{R. Bott}, \textsl{Vector fields and characteristic numbers}, Michigan Math. Jour. Vol. 14, (1967) 231-244.

\bibitem{ch} \textsc{S. S. Chern}, \textsl{Meromorphic vector fields and characteristic numbers}, Selected Papers, Springer-Verlag, New York, 1978, 435-443.

\bibitem{tian} \textsc{W. Ding, G. Tian}, \textsl{K\"ahler-Einstein metrics and the generalized Futaki invariant}, Invent. Math. 110, 315-335 (1992).



\bibitem{gri} \textsc{P. Griffiths, J. Harris}, \textsl{Principles of algebraic geometry}, Wiley, 1978.

\bibitem{li} \textsc{H. Li, Y. Shi}, \textsl{The Futaki invariant on the blowup of K\"ahler surfaces}. Int. Math. Res. Notices (2014)
doi: 10.1093/imrn/rnt351.

\bibitem{futa} \textsc{A.  Futaki, S. Morita  }, \textsl{Invariant Polynomials on Compact Complex Manifolds}, Proc. Japan Acad., 60, Ser. A 1984.

\bibitem{futa-Mor} \textsc{A.  Futaki, S. Morita  }: \textit{Invariant polynomials of the automorphism group of a compact complex manifold}, J. Differential Geom. 21 (1985), 135-142.



\bibitem{norg} \textsc{F. Norguet}, \textsl{Fonctions de plusieurs variables complexes}, Lecture notes in Mathematics, 409 (1974) 1-97.

\bibitem{futaki} \textsc{A. Futaki}, \textsl{An Obstruction to the Existence of Einstein K\"ahler Metrics}, Invent. Math. 73,
437-443 (1983).


\bibitem{man} \textsc{\'E Mann}, \textsl{Cohomologie quantique orbifolde des espaces projectifs \`a poids}, J. Algebraic Geom.
17 (2008), 137-166.


\bibitem{V}
\textsc{J. A. Viaclovsky}, 
\textit{Einstein metrics and Yamabe invariants of weighted projective spaces}. Tohoku Math. J. (2)
Vol.  65, Number 2 (2013), 297-311.


\end{thebibliography}
\end{document}